\documentclass[a4paper,reqno]{amsart}

\usepackage{amssymb,graphicx,xcolor}
\usepackage{amssymb,bbm,enumerate}
\usepackage{esint}
\usepackage{colortbl}
\usepackage{xcolor}

\numberwithin{equation}{section}\newtheorem{theorem}{Theorem}[section]
\newtheorem{corollary}[theorem]{Corollary}\newtheorem{lemma}[theorem]{Lemma}

\theoremstyle{remark}

\theoremstyle{definition}

\newcommand{\supp}{\mathrm{supp\,}}
\newcommand{\R}{\mathbb{R}}
\newcommand{\Rn}{\mathbb{R}^{n}}

%TITLE
\title
[A necessary condition for the Schr\"odinger maximal estimate]{An improved necessary condition for the Schr\"odinger maximal estimate}
\date{\today}    %%% ''\date{}'' to omit date

%AUTHOR
\author{Renato Luc\`a}
\author{Keith M. Rogers}
\address{Instituto de Ciencias Matem\'aticas CSIC-UAM-UC3M-UCM, Madrid, 28049, Spain.}
\email{renato.luca@icmat.es, keith.rogers@icmat.es}

\thanks{
Mathematics Subject Classification. Primary 42B37; Secondary  35Q40, 28A75}

%\keywords{Hausdorff measure and dimension, Fourier transform}

%\thanks{Partially supported by the ERC grant 277778 and the MINECO grants SEV-2011-0087 and MTM2013-41780-P (Spain). The first author is also partially supported by FIRB 2012 ``Dispersive dynamics: Fourier Analysis and Variational Methods'' (Italy).}

\begin{document}
\begin{abstract}   We improve the necessary condition for Carleson's problem regarding convergence for the Schr\"odinger equation in dimensions $n\ge 3$. We prove that if the solution  converges almost everywhere to its initial datum as time tends to zero, for all data in $H^s(\R^n)$, then $s\ge \frac{n}{2(n+2)}$.
\end{abstract}

\maketitle

\section{Introduction}

Consider the  Schr\"odinger equation, $i \partial_{t} u +\Delta u=0$, on $\R^{n+1}$, with initial datum 
$u(\cdot, 0) = u_{0}$, and 
 Carleson's problem of identifying 
the exponents $s > 0$ for which
\begin{equation}\label{CarlesonProblem}
\lim_{t\to 0} u(x , t) = u_{0}(x), \qquad \text{a.e.} \quad x \in \Rn, \qquad \forall \  u_0\in H^s.
\end{equation}
Here, $H^s$ denotes the inhomogeneous $L^2(\R^n)$--Sobolev space, defined via the Fourier transform as usual.
Carleson \cite{Carl} proved that \eqref{CarlesonProblem} holds as long as
 $s \geq 1/4$ in the one-dimensional case, and   Dahlberg and Kenig \cite{DahlKenig}  showed that this condition is necessary  in all dimensions, providing a complete solution for the one-dimensional case. 
The higher dimensional problem  has since been studied by many authors; see for example
\cite{Cow, Carb,  Sjolin, Vega, Bou3, Bou4, MVV1, MVV2, TV, T, GS}. The best known positive result, that (\ref{CarlesonProblem}) holds if 
$$s > \frac{1}{2}-\frac{1}{4n},$$ is due to Lee \cite{Lee} when $n=2$ and Bourgain~\cite{Bou2} when $n\ge 3$. Bourgain \cite{Bou2} also showed that $s\ge 1/2-1/n$ is necessary for \eqref{CarlesonProblem} to hold, improving the condition of Dahlberg and Kenig when $n\ge 5$.

Here we improve Bourgain's necessary condition and the condition of  Dahlberg and Kenig when $n\ge 3$.

\begin{corollary}\label{ones}
Let  $n\ge 3$ and suppose that \eqref{CarlesonProblem} holds. Then $s\ge\frac{1}{2}-\frac{1}{n+2}$.
\end{corollary}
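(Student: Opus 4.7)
The plan is to argue by contradiction, constructing initial data that violate the maximal inequality equivalent to (\ref{CarlesonProblem}). By the Nikishin--Stein theorem combined with the translation symmetry of $e^{it\Delta}$, a.e.\ convergence for all $u_0\in H^s$ yields the local maximal estimate $\|\sup_{0<t<1}|e^{it\Delta}u_0|\|_{L^2(B)}\lesssim \|u_0\|_{H^s}$ on any ball $B\subset\Rn$. It therefore suffices, given $s<\tfrac{1}{2}-\tfrac{1}{n+2}$, to exhibit a sequence $(u_{0,N})$ in $H^s$ for which this estimate fails as $N\to\infty$.

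The standard counterexample, going back to Bourgain, takes $\widehat{u}_{0,N}=\sum_{j}\chi_{B(\xi_j,\delta)}$ with centres $\xi_j$ drawn from a lattice piece such as $N^{-1}\mathbb{Z}^n\cap B(0,N)$. Each summand evolves as a spatial wave packet of width $\delta^{-1}$ travelling at velocity $-2\xi_j$, and these packets superpose coherently at focal times $t\in c N^{-2}\mathbb{Z}$. Bounding the number of coincident phases at such a time yields the bound $s\ge\tfrac12-\tfrac1n$.

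To push the threshold down to $\tfrac12-\tfrac1{n+2}$, I would expect the argument to pass through a fractal strengthening of the maximal estimate: that a.e.\ convergence in $H^s$ implies $\|\sup_{0<t<1}|e^{it\Delta}u_0|\|_{L^2(d\mu)}\lesssim \|u_0\|_{H^s}$ whenever $\mu$ is a finite Borel measure satisfying an $\alpha$-dimensional Frostman condition, and then to test this stronger estimate against a well-chosen $\alpha$-dimensional measure concentrated near the focal set of Bourgain's construction. Such a fractal test measure captures additional geometric gain from the curvature of the paraboloid that Bourgain's Lebesgue-based test does not see, and optimising in $\alpha$, $\delta$, and the cardinality of the lattice piece should yield the exponent $\tfrac12-\tfrac1{n+2}$. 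Corollary~\ref{ones} would then follow on taking $\alpha$ close to $n$.

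The main obstacle is the construction of the appropriate fractal test measure together with a sharp number-theoretic focusing count at the focal times. Once those two ingredients are in place, the remaining computation --- comparing $\|u_{0,N}\|_{H^s}$ with the pointwise size of $\sup_t|e^{it\Delta}u_{0,N}|$ on $\supp\mu$ --- reduces to Cauchy--Schwarz and Plancherel; essentially all the creativity lies in the geometric and arithmetic input.
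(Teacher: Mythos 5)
Your proposal starts along the same lines as the paper — Niki\v sin--Stein to pass from a.e.\ convergence to the local $L^2(B)$ maximal estimate, and lattice-type data $\widehat{u}_0 = \sum_j \chi_{B(\xi_j,\delta)}$ that refocus periodically in time — but from there your route diverges from the paper's and, more importantly, it contains a genuine gap.

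The gap is in the proposed implication that a.e.\ convergence for all $u_0\in H^s$ yields a maximal estimate $\|\sup_{0<t<1}|e^{it\Delta}u_0|\|_{L^2(d\mu)}\lesssim\|u_0\|_{H^s}$ for every $\alpha$-dimensional Frostman measure $\mu$. This is not a consequence of the hypothesis. The Niki\v sin--Stein maximal principle converts a.e.\ convergence \emph{with respect to a given measure} into a weak-type maximal bound \emph{with respect to that same measure}; since \eqref{CarlesonProblem} is a.e.\ with respect to Lebesgue measure, you only get the Lebesgue maximal estimate. A Frostman-measure maximal estimate is a strictly stronger conclusion (it controls the solution on null sets), and it does not follow from a.e.\ convergence. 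Without that implication your plan has no starting estimate to contradict, so the optimisation in $\alpha$ cannot be carried out. (Fractal maximal estimates do appear in the related literature on Hausdorff dimension of divergence sets, but there they are proved as positive results from the Lebesgue estimate, not deduced from a.e.\ convergence.)

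The paper avoids fractal measures entirely. The focal set $\Lambda$ of the \cite{BBCRV}-type data is indeed small, but the paper applies a Galilean boost $f_\theta(x)=e^{i\pi R\theta\cdot x}f(x)$, which rigidly translates the whole focusing pattern in the direction $\theta$ as time evolves. The new ingredient is an ergodic lemma (Lemma~\ref{Lemma:ToroImpr}), proved via Fourier series on $\mathbb{T}^n$, the Funk--Hecke formula, and the $L^1$ bound for the Dirichlet kernel: for a suitable $\theta\in\mathbb{S}^{n-1}$ the union over the discrete refocusing times of the translated lattice pieces is $\varepsilon R^{-1}$-dense in $B(0,1/2)$, provided $\sigma<\tfrac{1}{n+2}$. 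This makes the set $\Lambda_\theta$ where the boosted maximal function is $\gtrsim\sqrt{|\Omega|}\simeq R^{n\sigma/2}$ have Lebesgue measure comparable to $1$, so the ordinary $L^2(B(0,1))$ estimate already forces $s\ge n\sigma/2$, and letting $\sigma\to\tfrac{1}{n+2}$ gives the claim. The role you wanted a fractal measure to play — ``seeing'' a thin focal set — is instead played by the boost plus equidistribution, which fattens the focal set to positive Lebesgue measure.
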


When  the initial data $u_{0}$ is a Schwartz function, we can write 
\begin{equation}\label{rt}
u(x, t) = e^{i t \Delta}u_{0}(x):=\int_{\Rn}\widehat{u}_{0}(\xi)\, e^{2\pi ix\cdot\xi -4\pi^2it | \xi |^{2}} d \xi, 
\end{equation}
where $\widehat{u}_0$ denotes the Fourier transform of $u_0$.
By the Niki\v sin--Stein maximal
principle \cite{N, st}, the almost everywhere
convergence \eqref{CarlesonProblem} implies a weak $L^2$-estimate  for the
maximal operator, which in turn implies a strong estimate by interpolation with a trivial bound (see for example \cite[Proof of Lemma C.1]{BBCR}). Thus Corollary~\ref{ones} is a consequence of the following theorem.

\begin{theorem}\label{OurBouThm}
Let $n\ge 3$ and suppose that there is a constant $C_s$ such that  
\begin{equation}\label{oto}
\left\| \sup_{0< t < 1} \big| e^{it\Delta} f \big| \right\|_{L^{2}(B(0,1))} \le C_s\| f \|_{H^{s}(\Rn)}
\end{equation}
whenever $f$ is a Schwartz function. Then $s\ge\frac{n}{2(n+2)}$.
\end{theorem}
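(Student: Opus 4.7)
The plan is to prove Theorem~\ref{OurBouThm} by constructing, for each $s$ strictly below the threshold $n/(2(n+2))$, a sequence of Schwartz data $\{f_R\}_R$ that violates the maximal estimate \eqref{oto} as $R\to\infty$. The construction is arithmetic in nature, refining the Dahlberg--Kenig and Bourgain counterexamples.

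First, I would take
\[
\widehat{f}_R(\xi)=\sum_{\xi_0\in\xi_*+\Lambda}\phi(\xi-\xi_0),
\]
where $\phi$ is a standard Schwartz bump of unit $L^\infty$--size, $\Lambda\subset\mathbb Z^n$ is a lattice box whose size and shape (e.g.\ Knapp-type, with parabolic scaling in one direction and the centre $\xi_*$ at frequency scale $|\xi_*|\sim R$) will be tuned later. One has $\|f_R\|_{L^2}\sim |\Lambda|^{1/2}$ and $\|f_R\|_{H^s(\mathbb R^n)}\sim R^s|\Lambda|^{1/2}$. The main computation is to evaluate $e^{it\Delta}f_R$ at rational times $t=p/q$ with $\gcd(p,q)=1$ and $q$ a suitable power of $R$. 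Writing every relevant $\xi$ as $qm+r$ with $r\in(\mathbb Z/q\mathbb Z)^n$, the phase $e^{-4\pi^2 i(p/q)|\xi|^2}$ depends only on $r$, so the evolution approximately factors as
\[
e^{i(p/q)\Delta}f_R(x)\;\approx\;\Bigl(\sum_{r\in(\mathbb Z/q)^n}e^{-2\pi i p|r|^2/q+2\pi i r\cdot x}\Bigr)\cdot\Bigl(\sum_{m}\phi_R\bigl(x-m/q\bigr)\Bigr),
\]
which concentrates on $R^{-1}$--neighbourhoods of the dual lattice $q^{-1}\mathbb Z^n\cap B(0,1)$.

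On this lattice the inner sum is a multi-dimensional quadratic Gauss sum of magnitude $\sim q^{n/2}$, producing a focus value $F\sim|\Lambda|\, q^{-n/2}$ on $\sim q^n$ points, each smeared to scale $R^{-1}$. For the \emph{maximal} operator I would pair each focus point $x=a/q$ with its own time $t_x=p_x/q$, selecting $p_x\pmod q$ so that the associated Gauss sum essentially saturates. Exploiting this two-parameter freedom in $(a,p_x)\in(\mathbb Z/q)^n\times(\mathbb Z/q)$ (rather than a single optimal time, as in Bourgain's construction) is the source of the improvement from $1/n$ to $1/(n+2)$ in the denominator.

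Lower bounding
\[
\Bigl\|\sup_{0<t<1}|e^{it\Delta}f_R|\Bigr\|_{L^2(B(0,1))}\;\ge\; F\cdot(\text{meas.\ of focus set})^{1/2},
\]
setting this against the upper bound $C_s R^{s}|\Lambda|^{1/2}$ coming from \eqref{oto}, and then optimising the three free parameters (the geometry of $\Lambda$, the modulus $q$, and the scale $R$) produces the conclusion $s\ge n/(2(n+2))$.

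The main obstacle is the arithmetic step: verifying that a positive proportion of lattice points $x=a/q\in B(0,1)$ admit a choice $p_x$ for which the Gauss sum does not cancel, and then combining these contributions into a genuine $L^2$ lower bound rather than a pointwise one. This requires sharp control of multi-dimensional quadratic Gauss/Weyl sums for $n\ge 3$, together with care that the focusing regions associated with the different times $t_x=p_x/q$ remain essentially disjoint, so that there is no loss in passing from a pointwise focus bound to the $L^2(B(0,1))$ norm of the maximal function.
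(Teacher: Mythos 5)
Your proposal takes a genuinely different route from the paper's, attempting to reconstruct the improvement through a Gauss--sum refocusing argument at rational times. There is a real gap, and it sits exactly where you flag the ``main obstacle.''

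The paper's construction does not use Gauss sums at all. The data is the normalised indicator of a union of $\rho$--balls at the lattice points $R^{1-\sigma}\mathbb{Z}^n$ inside $B(0,R)$, and the refocusing bound \eqref{Phase=1} is verified by the elementary observation that the phase $2\pi(x\cdot\xi - \tfrac{t}{R}|\xi|^2)$ stays within $\tfrac{1}{10}$ of $2\pi\mathbb{Z}$ for $(x,t)\in\Lambda$. The decisive structural feature is that at \emph{every} time $t\in R^{2\sigma-1}\mathbb{Z}\cap(0,1)$ the solution refocuses on the \emph{same} spatial lattice $R^{\sigma-1}\mathbb{Z}^n$; the $\varepsilon R^{-1}$--neighbourhood of that set has measure only $\sim R^{-n\sigma}$, so a single time (or the union over all those times) gives nothing beyond trivial information. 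The new idea that carries the argument is the Galilean boost: replacing $f$ by $f_\theta=e^{i\pi R\theta\cdot x}f$ translates the focus set by $t\theta$ at time $t$, and Lemma~\ref{Lemma:ToroImpr} shows that $\theta$ can be chosen so that these translated copies are $\varepsilon R^{-1}$--dense in $B(0,1/2)$, forcing $|\Lambda_\theta|\gtrsim 1$. The numerology $s\ge n\sigma/2$ with $\sigma$ up to $1/(n+2)$ then drops out.

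Your proposed ``two-parameter freedom in $(a,p_x)$'' is not a substitute for this mechanism. For a fixed (say odd) modulus $q$ and any $p$ coprime to $q$, the complete Gauss sum $\sum_{r\in(\mathbb{Z}/q)^n}e^{-2\pi i p|r|^2/q+2\pi i a\cdot r/q}$ has magnitude $q^{n/2}$ for \emph{every} $a\in(\mathbb{Z}/q)^n$ (complete the square); the magnitude is independent of both $a$ and $p$. Consequently, at every rational time $t=p/q$ with the same $q$, the solution refocuses on the \emph{same} lattice $q^{-1}\mathbb{Z}^n$. Selecting a different numerator $p_x$ for each focus point $a/q$ therefore does not enlarge the set of points where the maximal function is big; the focusing regions at the different times $p_x/q$ are not ``essentially disjoint,'' as your plan requires, but identical. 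The gain from $1/n$ to $1/(n+2)$ really does come from the union over a whole time interval sweeping out new focus positions, but that sweep must be generated externally. In the paper it is produced by the Galilean boost combined with the ergodic argument; without an analogue of that in your construction, the argument you sketch can only recover, at best, a single-time Bourgain-type bound.
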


By the equivalence between local and global estimates \cite{R}, this also yields the following necessary condition for the global maximal estimate.

\begin{corollary}
Let $n\ge 3$ and suppose that there is a constant $C_s$ such that  
\begin{equation*}\label{otoGlobal}
\left\| \sup_{0< t < 1} \big| e^{it\Delta} f \big| \right\|_{L^{2}(\R^n)} \le C_s\| f \|_{H^{s}(\Rn)}
\end{equation*}
whenever $f$ is a Schwartz function. Then $s\ge\frac{n}{n+2}$.
\end{corollary}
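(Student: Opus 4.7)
The plan is to deduce the corollary from Theorem~\ref{OurBouThm} by invoking the equivalence between local and global Schr\"odinger maximal estimates proved in \cite{R}.

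The key ingredient is the following consequence of \cite{R}: up to multiplicative constants, the local estimate \eqref{oto} at Sobolev index $s$ is equivalent to the global estimate \eqref{otoGlobal} at index $2s$. In particular, the direction of interest here is that if \eqref{otoGlobal} holds at some $s$, then \eqref{oto} holds at exponent $s/2$. Morally, the factor of two reflects the parabolic scaling $(x,t)\mapsto(\lambda x,\lambda^2 t)$ of the Schr\"odinger equation, and emerges from a Littlewood--Paley reduction to frequency annuli $|\xi|\sim N$; we use it here as a black box.

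Granted the above, the conclusion is immediate. Assuming \eqref{otoGlobal} at exponent $s$, we obtain \eqref{oto} at exponent $s/2$, and Theorem~\ref{OurBouThm} then forces
\[
\frac{s}{2}\ge \frac{n}{2(n+2)},
\]
which rearranges to $s\ge n/(n+2)$, as desired.

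The main pitfall is to invoke the correct direction of the equivalence from \cite{R}. The trivial inclusion $L^2(B(0,1))\subset L^2(\R^n)$ only yields local at $s$ from global at $s$, which would give the weaker bound $s\ge n/(2(n+2))$; the nontrivial halving of the Sobolev index supplied by \cite{R} is indispensable to arrive at $s\ge n/(n+2)$.
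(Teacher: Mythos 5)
Your proposal is correct and is essentially the paper's own argument: the paper cites the local/global equivalence from \cite{R} and the corollary follows exactly as you describe, with the global estimate at index $s$ yielding the local estimate at index $s/2$ and Theorem~\ref{OurBouThm} then forcing $s/2\ge n/(2(n+2))$. Your parenthetical warning about the direction of the equivalence (and why the trivial inclusion is insufficient) correctly identifies the one point a careless reader might miss.
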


The counterexample of Dahlberg and Kenig consists of a concentrated solution, or wave-packet, that travels over a large area, making the left-hand side of \eqref{oto} large. On the other hand, Bourgain considered a sum of data, with different velocities, carefully chosen to create regions of constructive interference, recalling Young's double slit experiment with many slits. Again the regions of coherence travel over a large area, making the left-hand side of the maximal inequality large. 

In the light of Bourgain's example, a physical interpretation of Carleson's problem could be to identify the lowest frequency at which an initial state (or configuration of slits)  can generate interference patterns, thus obscuring their original state. 
%It seems reasonable to suppose that in the absence of interference the initial state can be recovered by tracing back from the later states.
Inspired by this, we take a variant of data, previously considered by Barcel\'o, Bennett, Carbery, Ruiz and Vilela \cite{BBCRV}, for which the corresponding solution interferes with itself periodically in time. The difficulty of using  their example directly in this context is that the constructive interference reoccurs in the same relatively small regions of space. In order to take advantage of the periodic coherence, we perturb the initial state so that the whole solution travels in a single direction.  We then use an ergodicity argument to show that this direction can be taken so that the regions of constructive interference never reappear in exactly the same places, forcing the left-hand side of \eqref{oto} to be large.

\section{The ergodic lemma}

We say that a set $E$ is $\delta$--dense in $F$ if for every point $x\in F$ there is a point $y\in E$ such that $|x-y|< \delta$. 

The following lemma is optimal in the sense that the statement fails for larger~$\sigma$. To see this, we can place balls of radius $\varepsilon R^{-1}$ at the points of the set $E_\theta$ and assume that the balls are disjoint. Then the volume of such a set would be of the order $R^{1-(n+2)\sigma}$, a quantity that tends to zero as $R$ tends to infinity when $\sigma > \frac{1}{n+2}$.

\begin{lemma}\label{Lemma:ToroImpr}
Let $n \geq 3$ and $0<\sigma <\frac{1}{n+2}$. Then for all $\varepsilon>0$, there exists  $\theta \in \mathbb{S}^{n-1}$ such that
\begin{equation*}
 E_\theta:= \bigcup_{t\in R^{2\sigma-1}\mathbb{Z}\cap(0,1)} \big\{x\in R^{\sigma-1} \mathbb{Z}^{n}\, :\, |x|< 2\big\}+t\theta
\end{equation*}
is $\varepsilon R^{-1}$--dense in $B(0,1/2)$ for all sufficiently large $R >1$. 
\end{lemma}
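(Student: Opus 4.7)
The plan is to reduce the statement to a quantitative equidistribution question on the torus, and then to produce a good direction $\theta$ by an averaging argument on $\mathbb{S}^{n-1}$ powered by the Fourier decay of the spherical measure.

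\textbf{Reduction.} First I would quotient by the underlying lattice. The fundamental cell of $R^{\sigma-1}\mathbb{Z}^n$ has side $R^{\sigma-1}$, and for $R$ large the ball $\{|x|<2\}$ covers every cell of the lattice that meets $\{|x|<1/2\}$. Reducing $E_\theta$ modulo $R^{\sigma-1}\mathbb{Z}^n$ and then rescaling space by $R^{1-\sigma}$ recasts the lemma as: for some $\theta\in\mathbb{S}^{n-1}$, the Kronecker-type orbit
\[
\Omega_\theta := \bigl\{\, jR^\sigma\theta \bmod \mathbb{Z}^n : 1\leq j\leq M\,\bigr\}
\]
is $\delta$-dense in $\mathbb{T}^n := \mathbb{R}^n/\mathbb{Z}^n$, where $M:=\lfloor R^{1-2\sigma}\rfloor$ and $\delta:=\varepsilon R^{-\sigma}$. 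The "volume budget" $M\delta^n = \varepsilon^n R^{1-(n+2)\sigma}$ diverges as $R\to\infty$ precisely because $\sigma<\tfrac{1}{n+2}$, mirroring the optimality remark preceding the lemma.

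\textbf{Averaging on the sphere.} Let $\phi_\delta$ be a smooth nonnegative bump on $\mathbb{T}^n$ concentrated near $0$ at scale $\delta$, with $\int\phi_\delta\sim \delta^n$ and $|\widehat{\phi}_\delta(m)|\lesssim \delta^n(1+\delta|m|)^{-A}$. For $y$ in a $\delta$-net $\mathcal{N}\subset\mathbb{T}^n$ (of cardinality $|\mathcal{N}|\sim\delta^{-n}$), I introduce the counting function
\[
N_\theta(y) := \sum_{j=1}^M \phi_\delta\bigl(jR^\sigma\theta - y \bmod \mathbb{Z}^n\bigr),
\]
so that simultaneous positivity of $N_\theta(\cdot)$ on $\mathcal{N}$ would force $\Omega_\theta$ to be $O(\delta)$-dense. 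Expanding $\phi_\delta$ in Fourier series, integrating over $\theta$ against the normalised surface measure $d\omega$ on $\mathbb{S}^{n-1}$, and invoking the classical bound $|\widehat{d\omega}(\xi)|\lesssim (1+|\xi|)^{-(n-1)/2}$ (effective when $n\geq 3$), the mean $\int_{\mathbb S^{n-1}} N_\theta(y)\,d\omega(\theta)$ equals $M\,\widehat{\phi}_\delta(0)$ plus an error smaller by a factor $R^{3\sigma-1}$ up to logarithms. The main term is of size $M\delta^n\to\infty$, and the error is $o(1)$ because $\sigma<\tfrac{1}{n+2}\leq\tfrac{1}{5}<\tfrac{1}{3}$.

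\textbf{Concentration and the main obstacle.} To turn this into existence of a good $\theta$, I would analyse a high even moment $\int_{\mathbb{S}^{n-1}} N_\theta(y)^{2p}\,d\omega(\theta)$ by the same Fourier expansion. Its main contributions come from "resonant" Fourier configurations $\sum_i \pm j_i m_i = 0$ (organised by divisor counting on the indices $j_i$), while non-resonant configurations are suppressed by the sphere's Fourier decay. Taking $p$ large in terms of $n$ yields a Chebyshev-type estimate on the $\omega$-measure of $\theta$'s with $N_\theta(y)=0$ that decays super-polynomially in $R$; since $|\mathcal{N}|\sim R^{n\sigma}$ is only polynomial, a union bound produces a positive-measure set of directions on which $\Omega_\theta$ is $\delta$-dense. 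The principal technical obstacle is precisely this moment bookkeeping: the divisor sums from the resonant configurations must be summed cleanly, and the three scales—the sphere's Fourier-decay exponent $(n-1)/2$, the $\delta$-cutoff of $\widehat{\phi}_\delta$, and the orbit length $M$—must balance exactly in the regime $n\geq 3$, $\sigma<\tfrac{1}{n+2}$. Both the dimension restriction (to guarantee a strong enough $(n-1)/2\geq 1$ decay rate and to tame the resulting $j$-sums $\sum_{j\leq M}j^{-(n-1)/2}$) and the $\sigma$-bound (to keep the volume budget diverging) are used optimally.
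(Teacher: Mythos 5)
Your setup is aligned with the paper's: reduce modulo the lattice to a quantitative equidistribution statement on $\mathbb{T}^n$, mollify at scale $\delta=\varepsilon R^{-\sigma}$, expand in Fourier series, and locate a good $\theta$ by averaging over $\mathbb{S}^{n-1}$. But there is a genuine gap at exactly the point you flag as ``the principal technical obstacle,'' and the gap arises because you have missed the one observation that makes the paper's argument close in a single step.

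You treat simultaneous positivity of $N_\theta(y)$ over all $y$ in a $\delta$-net as something that must be extracted by a high-moment/Chebyshev/union-bound argument, with divisor bookkeeping for the resonant configurations $\sum_i \pm j_i m_i=0$. You never carry this out, and it is far from clear that it closes: the mean is $\sim R^{1-(n+2)\sigma}$, the net has $\sim R^{n\sigma}$ points, and you would need moment bounds strong enough to beat the union bound at every scale, in a regime where you have already admitted the bookkeeping is delicate. The paper avoids all of this. The point is that once you bound
\[
\bigl|N_\theta(y)-M\,\widehat{\phi}_\delta(0)\bigr|\ \le\ \sum_{m\ne 0}\bigl|\widehat{\phi}_\delta(m)\bigr|\,\Bigl|\sum_{j=1}^{M}e^{2\pi i jR^\sigma\theta\cdot m}\Bigr|,
\]
the right-hand side no longer depends on $y$ at all. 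So if you can make this single quantity small for one $\theta$, that $\theta$ works for \emph{every} $y$ simultaneously --- no net, no union bound, no higher moments. To produce such a $\theta$, it then suffices to bound the average over $\theta\in\mathbb{S}^{n-1}$ of this $y$-free quantity, i.e.\ to bound $\int_{\mathbb{S}^{n-1}}\bigl|\sum_{j\le M}e^{2\pi ijR^\sigma\theta\cdot m}\bigr|\,d\theta$ for each $m\ne 0$. Note this is not the quantity $\sum_j |\widehat{d\omega}(jR^\sigma m)|$ that your first-moment computation controls (there the absolute value sits outside the $\theta$-integral but inside the $j$-sum, the opposite of what is needed). The paper instead collapses the geometric sum to a Dirichlet kernel, applies the Funk--Hecke theorem to reduce the sphere integral to a one-dimensional integral with weight $(1-t^2)^{(n-3)/2}$ --- this is where $n\ge 3$ enters, to make the weight bounded --- and then invokes the classical $L^1$ bound $\int|D_N|\lesssim\log N$ for the Dirichlet kernel, giving the error $\lesssim R^{2\sigma-1}\log R$, which is $o(R^{-n\sigma})$ precisely because $\sigma<\frac{1}{n+2}$. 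So the route through $|\widehat{d\omega}(\xi)|\lesssim(1+|\xi|)^{-(n-1)/2}$ plus moments is not the mechanism here; the mechanism is a single absolute-value rearrangement that decouples $x$ from $\theta$, plus Funk--Hecke and the Dirichlet kernel.
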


\begin{proof}
By rescaling, the statement of the lemma  is equivalent to showing that
\begin{equation}\nonumber
\bigcup_{t \in R^{\sigma} \mathbb{Z}\cap(0,R^{1-\sigma})} \big\{x\in \mathbb{Z}^{n}\, :\, |x|< 2R^{1-\sigma} \big\}+t\theta
\end{equation}
is $\varepsilon R^{-\sigma}$--dense in $B(0,R^{1-\sigma}/2)$ for a certain $\theta \in \mathbb{S}^{n-1}$.
That is to say, for any $x\in B(0,R^{1-\sigma}/2)$ there exists a $y_{x} \in \mathbb{Z}^{n}\cap B(0,2R^{1-\sigma})$ and
$t_{x}\in R^{\sigma} \mathbb{Z}\cap (0,R^{1-\sigma})$ such that
\begin{equation*}\label{EquivTorusImpr}
|x - (y_{x} + t_{x}\theta) |< \varepsilon R^{-\sigma},
\end{equation*}
for a certain $\theta \in \mathbb{S}^{n-1}$, independent of $x$.
By taking the quotient $\Rn / \mathbb{Z}^{n} = \mathbb{T}^{n}$, this would follow if for any $[x] \in \mathbb{T}^{n}$ there exists $t_{x} \in R^{\sigma} \mathbb{Z}\cap (0,R^{1-\sigma})$  such that
\begin{equation}\label{TorusRiductionImpr}
|[x] - [t_{x} \theta]| < \varepsilon R^{-\sigma}.
\end{equation} 
To see this, assume  (\ref{TorusRiductionImpr}) and cover
 $B(0,R^{1-\sigma}/2)$ with a family of disjoint copies of axis-parallel $\mathbb{T}^{n}$. Denote 
the copy that contains $x$ by $\mathbb{T}^{n}_{x}$,
and let $z_{x}$ be the point in $\mathbb{T}^{n}_{x}$ such that $[z_{x}] = [t_{x} \theta]$. 
Then $y_{x}: = z_{x} - t_{x}\theta\in  \mathbb{Z}^{n}$ and
 by construction
\begin{equation}
|x- (y_{x}+t_{x} \theta)|  
= |[x] - [t_{x}\theta]| < \varepsilon R^{-\sigma}.
\end{equation} 
Note that we also automatically have that
\begin{equation*}
|y_x|\le |x|+|t_x|+  \varepsilon R^{-\sigma}< \tfrac{1}{2}R^{1-\sigma}+R^{1-\sigma}+\varepsilon R^{-\sigma}<2R^{1-\sigma},
\end{equation*} 
and so we recover all of the required properties.
 It seems likely that ergodic results, similar to   
(\ref{TorusRiductionImpr}), are well-known, however we prove this now using Fourier series.
% as in~\cite{Bou4}. 
We write
$x$ in place of $[x]$ from now on.

Let $\phi : \mathbb{T}^{n} \to [0,(2/\varepsilon)^n)$ be smooth, supported in $B(0,\varepsilon/2)$, such that $\int \phi=1$,  and set 
\begin{equation*}\label{Def:PhiTorusImpr}
\phi_{R}(x) := \phi \big( R^{\sigma}x \big).
\end{equation*}
If we could show that there exists $\theta \in \mathbb{S}^{n-1}$ such that 
for all $x \in \mathbb{T}^{n}$ there is a
$t_{x} \in (R^{\sigma} \mathbb{Z}+[-\tfrac{\varepsilon}{2}R^{-\sigma},\frac{\varepsilon}{2}R^{-\sigma}])\cap (0,R^{1-\sigma})$ satisfying
\begin{equation}\label{TorusRiductionBisImpr}
\phi_{R}(x-t_{x} \theta) > 0, 
\end{equation}
then (\ref{TorusRiductionImpr}) would follow.
Let $\psi: (-\varepsilon/2,\varepsilon/2) \to [0,2/\varepsilon)$ be a one-dimensional Schwartz function such that $\int \psi=1$, 
and define 
$$
\eta_{R}(t) := R^{3\sigma-1} \sum_{\substack{j \in \mathbb{Z} \\ 0 < j < R^{1-2\sigma}}} \psi(R^{\sigma}(t - R^{\sigma} j)).
$$
Noting that $\eta_{R}$ is supported in 
$R^{\sigma} \mathbb{Z}+[-\frac{\varepsilon}{2}R^{-\sigma},\frac{\varepsilon}{2}R^{-\sigma}]$, we will show that there exists $\theta \in \mathbb{S}^{n-1}$ such that,
for all $x \in \mathbb{T}^{n}$,
\begin{equation*}
\int_{\R} \phi_{R}(x-t \theta)  \eta_{R}(t)\, d t  >0,
\end{equation*}
which implies (\ref{TorusRiductionBisImpr}).
Expanding in Fourier series;
\begin{equation}\nonumber
\phi_{R}(x-t\theta) = 
\widehat{\phi_{R}} (0) 
+
 \sum_{\substack{k \in \mathbb{Z}^{n} \\ k\neq 0}} \widehat{\phi_{R}} (k) e^{ 2\pi ix \cdot k}e^{-2\pi i t \theta \cdot k}
=: \widehat{\phi_{R}} (0)+ \Gamma(t, x, \theta), 
\end{equation}
and noting that $\int_{\R} \eta_{R} \simeq 1$ and
$\widehat{\phi_{R}}(0) = \int_{\mathbb{T}^{n}} \phi_{R} \simeq R^{-n\sigma}$,
it would be sufficient 
to find $\theta \in \mathbb{S}^{n}$ such that\footnote{We write $A \lesssim B$ if $ A\leq C B$ for some constant $C > 0$ that only depends on unimportant parameters. We also write $A \simeq B$ if  $ A \lesssim B$ and $B \lesssim A$.}
\begin{equation}\label{WBS}
\Big|  \int_{\R} \Gamma(t, x, \theta)\eta_{R}(t)\,dt  \Big| 
% \leq c\widehat{\phi_{R}} (0)=c\int_{\mathbb{T}^{n}} \phi_{R}
\lesssim R^{-\gamma},\quad \gamma>n\sigma
\end{equation}
whenever $x \in \mathbb{T}^{n}$.

For the proof of \eqref{WBS}, we note that
\begin{eqnarray}\nonumber
\Big|  \int_{\R} \Gamma(t, x, \theta) \eta_{R}(t) dt  \Big|
& \leq & 
\sum_{\substack{k \in \mathbb{Z}^{n} \\ k\neq 0}} \Big| \widehat{\phi_{R}} (k) \Big|
 \ \Big|   \int_{\R} e^{-2\pi i t \theta \cdot k} \eta_{R}(t) d t  \Big|
 \\ \nonumber
 &= &
\sum_{\substack{k \in \mathbb{Z}^{n} \\ k\neq 0}} \Big| \widehat{\phi_{R}} (k) \Big|
 \ \Big|    \widehat{\eta_{R}}( \theta\cdot k)  \Big|\\
&\lesssim& \sum_{\substack{k \in \mathbb{Z}^{n} \\ k\neq 0}} \frac{R^{-n\sigma}}{\left( 1 + R^{-\sigma} |k|  \right)^{n+1}} \Big|    \widehat{\eta_{R}}( \theta\cdot k)  \Big|, \label{pull}
 \end{eqnarray}
where the final inequality uses the Schwartz decay which follows by integrating by parts in the formula for the Fourier coefficients. Noting that the right-hand side of \eqref{pull} no longer depends on~$x$, in order to find a $\theta\in \mathbb{S}^{n-1}$ such that (\ref{WBS}) holds for all $x\in \mathbb{T}^{n}$, it will suffice to prove that the the right-hand side of \eqref{pull} is similarly bounded after averaging over the sphere. As
$$
 \sum_{\substack{k \in \mathbb{Z}^{n} \\ k\neq 0}} 
 \frac{R^{-n\sigma}}{\left( 1 + R^{-\sigma} |k|  \right)^{n+1}}\lesssim  \int_{\Rn} \frac{R^{-n\sigma}}{\left( 1 + R^{-\sigma} |k|  \right)^{n+1}}\, dk\lesssim 1, 
$$
by Fubini's theorem, it would suffice to prove that 
\begin{equation}\label{ORTSINTEGRAL}
\int_{\mathbb{S}^{n-1}} \Big|   \widehat{\eta_{R}}( \theta\cdot k)  \Big|\, d \theta \lesssim  R^{2\sigma-1} \log R.
\end{equation}
We then use that $\sigma<\frac{1}{n+2}$ so that $1-2\sigma>n\sigma$.

To see \eqref{ORTSINTEGRAL},
we calculate
\begin{eqnarray*}\label{ExplFour}
\widehat{\eta_{R}}(t)
&= & 
R^{3\sigma-1} \sum_{\substack{j \in \mathbb{Z} \\ 0 < j < R^{1-2\sigma}}}
\psi\big(R^{\sigma} ( \cdot - R^{\sigma} j)\big)^{\wedge}(t)
\\ \nonumber
& = & 
R^{2\sigma-1} 
\widehat{\psi}(R^{-\sigma} t)
\sum_{\substack{j \in \mathbb{Z} \\ 0 < j < R^{1-2\sigma}}}
 e^{-2\pi i  R^{\sigma}j t }
 \\ \nonumber
& = &
R^{2\sigma-1} 
\widehat{\psi}(R^{-\sigma} t)\frac{e^{2\pi i\lfloor R^{1-2\sigma} \rfloor R^\sigma t}-e^{-2\pi i  R^{\sigma} t }}{e^{2\pi iR^\sigma t}-1}.
\end{eqnarray*}
Now since $| \widehat{\psi} \,| \lesssim 1$ this yields
\begin{equation}\nonumber
  \int_{\mathbb{S}^{n-1}} \Big| \widehat{\eta_{R}}( \theta \cdot k) \Big|\, d \theta 
\lesssim
R^{2 \sigma-1} \int_{\mathbb{S}^{n-1}} 
      \Big|  \frac{\sin(\pi N R^{\sigma} \theta \cdot k)}{\sin (\pi  R^{\sigma} \theta \cdot k)}  \Big| \,  d\theta,
\end{equation}
where $N=\lfloor R^{1-2\sigma} \rfloor+1$.
By the Funk--Hecke theorem (see for example \cite[pp. 35-36]{at}), we have that
\begin{eqnarray*}
\int_{\mathbb{S}^{n-1}} 
      \Big|  \frac{\sin(\pi N R^{\sigma} \theta \cdot k)}{\sin (\pi  R^{\sigma} \theta \cdot k)}  \Big|  \, d\theta&=& |\mathbb{S}^{n-2}| \int_{-1}^1 \Big|  \frac{\sin(\pi NR^{\sigma}|k|t )}{\sin (\pi  R^{\sigma} |k|t)}  \Big| (1-t^2)^{\frac{n-3}{2}} dt\\
      &\leq& \frac{|\mathbb{S}^{n-2}|}{R^{\sigma}|k|} \int_{-R^{\sigma}|k|}^{R^{\sigma}|k|} \Big|  \frac{\sin(\pi Nt )}{\sin (\pi t)}  \Big|\,  dt\\
      &\lesssim& \log N \ \lesssim \ \log R,        
\end{eqnarray*}
where the penultimate inequality is a well-known property of the Dirichlet kernel (see for example \cite[pp. 182]{G}).  This completes the proof of 
(\ref{ORTSINTEGRAL}) which completes the proof of the lemma. 
\end{proof}

\section{Proof of Theorem~\ref{OurBouThm}}
\label{counter}

The maximal estimate \eqref{oto} implies the same estimate over a smaller time interval, and so writing $t/(2\pi R)$ in place of $t$, we know that
\begin{equation}\label{otooBis}
\left\| \sup_{0< t < 1} \big| e^{i\frac{t}{2\pi R}\Delta} f \big| \right\|_{L^{2}(B(0,1))} \lesssim R^s\| f \|_{2}
\end{equation}
whenever $\supp \widehat{f} \subset B(0,2R)$ and $R>1$. Thus it would suffice to prove that for this to hold it is necessary that 
$s \geq \frac{n}{2(n+2)}$. In fact \eqref{otooBis} is equivalent to \eqref{oto}; see~\cite{Lee, LR}, and so we have not thrown anything away here. 

Letting 
$0 < \sigma < \frac{1}{n+2}$
 we define 
\begin{equation}\nonumber
\Omega := \big\{ \xi\in R^{1-\sigma} \mathbb{Z}^{n}  \,:\, |\xi|< R \big\} + B(0,\rho),
\end{equation}
where $\rho$ is to be chosen later.
Let $\theta \in \mathbb{S}^{n-1}$, and consider initial data $f_\theta$ defined by 
%$$\widehat{f_\theta}(\xi)=\widehat{f}\Big(\xi-\frac{R}{2}\theta\Big),$$
\begin{equation*}\label{TestinfFunctBenn}
f_\theta(x) = e^{i\pi R\theta\cdot x} f(x),\quad \text{where}\quad \widehat{f} = \frac{1}{ \sqrt{|\Omega|} }\chi_{\Omega}.
\end{equation*}
Note that $| \supp \widehat{f_\theta}\,| = |\Omega|  \simeq R^{n\sigma}$, and
$ \| f_\theta \|_{2}= 1$.
In \cite{BBCRV}, it was shown that
\begin{equation}\label{Phase=1}
|e^{i\frac{t}{2\pi R}\Delta} f(x)| \gtrsim \sqrt{|\Omega|}
\quad
\quad \forall \
(x,t) \in \Lambda,
\end{equation}
where, taking $\varepsilon$ sufficiently small, $\Lambda$ is defined by
\begin{equation}\nonumber
\Lambda = \big\{ x\in R^{\sigma-1}\mathbb{Z}^{n}\,:\, |x|< 2\big\}+ B(0,\varepsilon R^{-1}) \times   \big\{ t\in  R^{2\sigma-1}\mathbb{Z}\, :\, 0<t<1\big\}.
\end{equation}
 We provide the 
%short 
proof of this  for completeness. The idea is that the phase in the integrand in \eqref{rt} never strays too far from zero modulo~$2\pi i$, and so the different pieces of the integral, corresponding to different pieces of~$\Omega$, cannot cancel each other out. 
In \cite{BBCRV} they proved that the solution is still large in small intervals of time, however this will suffice for our needs.

We start by showing that 
\begin{equation}\label{I*}
x\cdot \xi \in \mathbb{Z} +B(0,\tfrac{1}{20}),
\end{equation}
provided that $\xi \in \Omega$ and $x \in R^{\sigma-1}\mathbb{Z}^{n}\cap B(0,2) +B(0,\varepsilon R^{-1})$.  To see this, we write
\begin{equation}\nonumber
\xi =R^{1-\sigma} \ell   + v, \qquad \text{where}\quad
\ell \in \mathbb{Z}^{n}, \ \ |\ell| < R^{\sigma}, \ \ |v| < \rho 
\end{equation}
and
\begin{equation}\nonumber
x =   R^{\sigma-1}m + u, \qquad \text{where}\quad
m \in \mathbb{Z}^{n}, \ \  |m| < 2R^{1-\sigma}, \  \ |u| < \varepsilon R^{-1}, 
\end{equation}
so that
\begin{eqnarray*}
x \cdot \xi 
& = &
( R^{\sigma-1}m + u)\cdot( R^{1-\sigma} \ell + v)
\\ \nonumber
& = &
  m \cdot \ell 
+   R^{\sigma-1} m\cdot v
+   R^{1-\sigma} \ell \cdot u
+u\cdot v
\\
\nonumber 
& =: & I_{1}+I_{2}+I_{3}+I_{4}.
\end{eqnarray*}
Since $I_{1} \in \mathbb{Z}$ and
\begin{equation}\nonumber
| I_{2} | <  R^{1-\sigma} 2R^{\sigma-1} \rho = 2\rho,
\quad
| I_{3} | <   R^{1-\sigma}R^{\sigma} \varepsilon R^{-1} =  \varepsilon,
\quad
| I_{4} | < \rho \varepsilon R^{-1},
\end{equation}
 we see that (\ref{I*}) holds by taking  $\rho$ and $\varepsilon$ sufficiently small. On the other hand, we also have that
\begin{equation}\label{II*}
\frac{t}{R} |\xi|^{2} \in  \mathbb{Z} + \left(-\tfrac{1}{20}, \tfrac{1}{20}\right),
\end{equation}
provided that
$
t \in   R^{2 \sigma -1}  \mathbb{Z}   \cap (0,1).
$
To see this, we write
\begin{equation}\nonumber
t =  R^{2\sigma -1}k, \qquad \text{where}
\quad
k \in  \mathbb{Z},
\ \
0<k < R^{1-2\sigma}, 
\end{equation}
so that
\begin{eqnarray*}
\frac{t}{R} |\xi|^{2} 
& = & 
  R^{2(\sigma-1)}k| R^{1-\sigma}\ell + v|^{2}
\\ \nonumber
& = &
  R^{2(\sigma-1)}k\big( R^{2(1-\sigma)} | \ell|^{2}  + |v|^{2} +   2R^{1-\sigma}\ell \cdot v\big)
\\ \nonumber
& =: &
I\! I_{1} + I\! I_{2} + I\! I_{3},\end{eqnarray*}
where
$I\! I_{1}  \in \mathbb{Z}$ while
\begin{equation}\nonumber
| I\! I_{2} | \leq   R^{2(\sigma - 1)}  k |v|^{2}  <  R^{2(\sigma-1)} R^{1-2\sigma} \rho^{2} = \rho^{2}R^{-1}, 
\end{equation}
and
\begin{equation}\nonumber
| I\! I_{3} | \leq     R^{2(\sigma-1)} k  2 R^{1-\sigma} |\ell \cdot v| \leq  2R^{\sigma-1} k |\ell| |v| <
 2R^{\sigma-1} R^{1-2\sigma} R^{\sigma} \rho \leq  2\rho,
\end{equation}
so that (\ref{II*}) is satisfied for sufficiently small $\rho$. Indeed altogether $|\rho|, |\varepsilon|\le \frac{1}{100}$ is sufficient for our purposes.
Now (\ref{I*}) and (\ref{II*})
imply that the phase in 
$$
e^{i\frac{t}{2\pi R}\Delta} f(x)=\frac{1}{\sqrt{|\Omega|}}\int_{\Omega} e^{2\pi ix\cdot\xi -2\pi i\frac{t}{R} | \xi |^{2}} d \xi, 
$$
 is close enough to zero modulo $2\pi i$ as long as $(x,t)\in \Lambda$, yielding  (\ref{Phase=1}).

We now consider 
$\Lambda_{\theta,t} \subset \Rn$ defined by
\begin{equation}\nonumber
\Lambda_{\theta,t} :=
\big\{ x\in R^{\sigma-1}\mathbb{Z}^{n}\,:\, |x|< 2\big\}+ B(t\theta,\varepsilon R^{-1}),
\end{equation}
and note that
\begin{equation}\nonumber
 x\in \Lambda_{\theta,t} \quad \text{and} \quad t\in R^{2\sigma - 1} \mathbb{Z}\cap(0,1) \quad \Rightarrow\quad (x-t \theta,t) \in \Lambda.
\end{equation}
Thus, by (\ref{Phase=1}),
we have that
\begin{equation}\nonumber
\sup_{0< t < 1} \big| e^{i\frac{t}{2\pi R}\Delta} f(x -t\theta )|  \gtrsim \sqrt{|\Omega|}
\quad
\quad
\forall \
x \in \Lambda_{\theta} :=\bigcup_{t\in R^{2\sigma - 1} \mathbb{Z}\cap(0,1)}\Lambda_{\theta,t}. 
\end{equation}
 By Galilean invariance, or direct calculation using the formula \eqref{rt}, we have 
$$
\sup_{0< t < 1} \big| e^{i\frac{t}{2\pi R}\Delta} f_\theta(x)|=\sup_{0< t < 1} \big| e^{i\frac{t}{2\pi R}\Delta} f(x -t\theta )|,
$$
and we recall that $\|f_\theta\|_2=\|f\|_{2}= 1$.
Thus,  by  taking $f_\theta$ in (\ref{otooBis}), we obtain
\begin{equation}\nonumber
\sqrt{|\Omega| |\Lambda_{\theta}|} \lesssim R^{s}.
\end{equation}
Since $\Lambda_\theta$ is  nothing more that the $\varepsilon R^{-1}$--neighbourhood of $E_\theta$ from the second section, 
we can use Lemma~\ref{Lemma:ToroImpr} to take  $\theta\in \mathbb{S}^{n-1}$ so that  $|\Lambda_{\theta}| \ge |B(0,1/2)|$  for  sufficiently large $R$. As~$|\Omega| \gtrsim R^{n\sigma}$, we let $R$ tend to infinity so that  
\begin{equation}\nonumber
s \geq \frac{n\sigma}{2},
\end{equation}
and the proof is  completed by letting $\sigma$ tend to  $\frac{1}{n+2}$ as we may.    \hfill $\Box$

\end{document}